\newtheorem{thm}{Theorem}
\newtheorem{lem}[thm]{Lemma}
\newtheorem{prop}[thm]{Proposition}
\theoremstyle{definition}
\theoremstyle{remark}
\theoremstyle{example}
\newtheorem{qu}[thm]{Question}
\newcommand{\set}[1]{\left\{#1\right\}}
\newcommand{\A}{\mathcal{A}}
\newcommand{\Pm}{\mathrm{Prim}}
\newcommand{\mset}{\emptyset}
\newcommand{\st}{$C^*$}
\newcommand{\pr}{\prime}
\newcommand{\BH}{$\mathcal{B}(H)$}
\newcommand{\Int}{\mathrm{Int}}
\begin{document}
\baselineskip=18pt
\title{Continuous Fields of Postliminal $C^*$-algebras}
\author{Aldo J. Lazar}
\address{School of Mathematical Sciences\\
         Tel Aviv University\\
         Tel Aviv 69978, Israel}
\email{aldo@post.tau.ac.il}

\thanks{We wish to express our gratitude to Professor E. Christensen and Professor J. Phillips for providing us the reference \cite{C2}.}%
\subjclass{46L05} \keywords{continuous fields of $C^*$-algebras, postliminal $C^*$-algebras, liminal $C^*$ algebras}
\date{April 22, 2014}

\begin{abstract}

   We discuss a problem of Dixmier \cite[Problem 10.10.11]{D} on continuous fields of postliminal \st -algebras and the greatest liminal ideals of the
   fibers.

\end{abstract}
\maketitle

\section{Introduction}

In \cite[Problem 10.10.11]{D} Dixmier asked the following question: given a continuous field $((A(t)),\Theta)$ of postliminal \st -algebras over
some topological space $T$ and with $B(t)$ the greatest liminal ideal of $A(t)$ and $\Theta^{\pr} := \{x\in \Theta \mid x(t)\in B(t), t\in T\}$
is $((B(t)),\Theta^{\pr})$ a continuous field of \st -algebras? We shall call a continuous field of postliminal \t -algebras for which the
answer to this question is affirmative a tame continuous field.

An example of a continuous field that is not tame can be constructed over $T := \mathbb{N}\cup \{\infty\}$. We let $A(n)$, $n\in \mathbb{N}$, be
the unitization of $K(H)$, the algebra of all compact operators over an infinite dimensional Hilbert space $H$, and $A(\infty) :=
\mathbb{C}I_H$, $I_H$ being the identity operator on $H$. $\Theta$ consists of all the fields $x$ such that $x(n) = \lambda_nI_H + a_n$,
$\{\lambda_n\}$ being a sequence in $\mathbb{C}$ that converges to some $\lambda\in \mathbb{C}$, $\{a_n\}$ being a sequence in $K(H)$ that
converges to $\{0\}$, and $x(\infty) = \lambda I_H$. Then $((A(t))_{t\in T},\Theta)$ is a continuous field of postliminal \st -algebras. Now the
largest liminal ideal of $A(n)$ is $B(n) = K(H)$ and the largest liminal ideal of $A(\infty)$ is $B(\infty) = A(\infty) = \mathbb{C}I_H$.
Clearly $x\in \Theta$ satisfies $x(t)\in B(t)$ for every $t\in T$ only if $x(\infty) = 0$ and the continuous field is not tame.

In the next section we shall show that the continuous fields of postliminal \st -algebras in a certain class that properly includes the locally
trivial continuous fields are always tame. Another result which we prove is a necessary and sufficient condition for tameness expressed in terms
related to the minimal primitive ideals of the fibers. Afterwards we shall exhibit an example of a continuous field of postliminal \st -algebras
such that all its fibers are mutually isomorphic and its restriction to any open subset of the base space is not tame.

Let $A$ be the \st -algebra of the continuous field $((A(t)),\Theta)$ of \st -algebras over the locally compact Hausdorff space $T$ as defined
in \cite[10.4.1]{D}. By \cite[Theorem 1.1]{F} for every primitive ideal $P$ of $A$ there exist a unique $t_P\in T$ and a unique primitive ideal
$Q_P$ of $A(t_P)$ such that $P = \{x\in A \mid x(t_P)\in Q_P\}$ and conversely every pair $(t,Q)$, $t\in T,\ Q\in \Pm(A(t)$ determines a
primitive ideal of $A$ in this manner. Obviously $P$ is minimal in $\Pm(A)$ if and only if $Q_P$ is minimal in $A(t_P)$.

We shall use the terminology and the notation for continuous fields as introduced in \cite[Chapter 10]{D}. The preference to work with
continuous fields rather than Banach bundles as it is more common nowadays is motivated by the fact that Dixmier's original question was
expressed in these terms. The closed unit ball of the Banach space $X$ is denoted $X_1$.

\section{Results} \label{S:R}

The main ingredient in the proof of Proposition \ref{P:B} that follows is Michael's selection theorem \cite[Theorem 3.2'']{M}: a multivalued map
$\varphi$ from a paracompact space $T$ to the family of the non-void closed convex subsets of a Banach space $X$ that is lower semicontinuous
admits a continuous selection, i.e., there is a continuous function $f : T\to X$ such that $f(t)\in \varphi(t)$ for every $t\in T$. Moreover, if
$F$ is a closed subset of $T$ and $g : F\to X$ is a continuous selection for $\varphi\mid_F$ then one may choose $f$ so $f\mid_F = g$ is
satisfied. Recall that $\varphi$ is called lower semicontinuous if for each open subset $U$ of $X$ the set $\{t\in T \mid \varphi(t)\cap U\neq
\mset\}$ is open.

\begin{prop} \label{P:B}

   Let $T$ be a paracompact space or a locally compact Hausdorff space and $X$ a Banach space. Denote by $\mathcal{M}$ the space of the closed
   unit balls of all the closed subspaces of $X$ endowed with the Hausdorff metric. Suppose $t\to X(t)_1$, $t\in T$, is a continuous map into
   $\mathcal{M}$, $X(t)$ being a closed subspace of $X$. With $\Gamma$ the space of all the continuous functions $\varphi : T\to X$ such that
   $\varphi(t)\in X(t)$, $t\in T$, $((X(t)),\Gamma)$ is a continuous field of Banach spaces.

\end{prop}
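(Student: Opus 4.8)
The plan is to verify the axioms of a continuous field of Banach spaces as in Dixmier's definition. Recall that $((X(t)),\Gamma)$ is a continuous field of Banach spaces provided: (i) $\Gamma$ is a linear subspace of $\prod_{t\in T}X(t)$; (ii) for every $t\in T$ the set $\{\varphi(t)\mid\varphi\in\Gamma\}$ is dense in $X(t)$; (iii) for every $\varphi\in\Gamma$ the map $t\mapsto\norm{\varphi(t)}$ is continuous; and (iv) $\Gamma$ is locally uniformly closed, i.e., if $\psi\in\prod_t X(t)$ and for each $t_0\in T$ and each $\eps>0$ there is a $\varphi\in\Gamma$ with $\norm{\psi(s)-\varphi(s)}\le\eps$ for all $s$ in a neighborhood of $t_0$, then $\psi\in\Gamma$. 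The linearity in (i) is immediate since sums and scalar multiples of continuous $X$-valued functions are continuous and the $X(t)$ are subspaces. Condition (iii) is automatic because every $\varphi\in\Gamma$ is by definition a continuous map into $X$, so $t\mapsto\norm{\varphi(t)}$ is continuous.

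The two substantive points are the density axiom (ii) and the completeness axiom (iv), and here is where Michael's selection theorem enters. First I would set up the multivalued map. The natural candidate is $\varphi\mapsto$ the assignment $t\mapsto X(t)$, but to invoke Michael's theorem I must produce an \emph{affine} target with nonempty closed convex values. The key observation is the following: fix $t_0\in T$ and a vector $v\in X(t_0)$; to establish density I want a continuous selection $f$ of $t\mapsto X(t)$ with $f(t_0)=v$. I would define $\Phi(t):=X(t)$ and attempt to apply the version of Michael's theorem with boundary data, using the closed set $F=\{t_0\}$ and the selection $g(t_0)=v$ on it. Thus the crux is to verify that $\Phi$ is lower semicontinuous, i.e., that for every open $U\subseteq X$ the set $\{t\mid X(t)\cap U\neq\mset\}$ is open.

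The lower semicontinuity is the main obstacle and the place where the hypothesis on the Hausdorff metric is used. I would argue as follows. Suppose $t_0$ satisfies $X(t_0)\cap U\neq\mset$, say $w\in X(t_0)\cap U$ with $\norm{w-u}<\delta$ for some $u\in U$ and a ball of radius $\delta$ around $u$ contained in $U$. I want to find a neighborhood $V$ of $t_0$ so that $X(t)\cap U\neq\mset$ for $t\in V$. The idea is to scale $w$ into the unit ball: replacing $w$ by $\lambda w$ with $\lambda$ small shows $\lambda w\in X(t_0)_1$, and since $t\mapsto X(t)_1$ is Hausdorff-continuous, for $t$ near $t_0$ there is $z_t\in X(t)_1$ with $\norm{z_t-\lambda w}$ small; then $\lambda^{-1}z_t\in X(t)$ approximates $w$ and hence lies in $U$ for $t$ close enough. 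This requires a careful $\eps$--$\delta$ bookkeeping to control the scaling factor, and some care is needed near $w=0$, but once the Hausdorff-metric continuity of $t\mapsto X(t)_1$ is translated into pointwise approximation of points of $X(t_0)$ by points of $X(t)$, lower semicontinuity follows.

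With lower semicontinuity established, density (ii) is immediate from the boundary version of Michael's theorem applied as above. For the completeness axiom (iv), I would not need selection theory: if $\psi$ is locally uniformly approximable by elements of $\Gamma$, then near each $t_0$ it is a uniform limit of continuous $X$-valued functions, hence continuous at $t_0$, so $\psi$ is continuous on $T$; and since each approximant takes values $\varphi(s)\in X(s)$ and $X(s)$ is closed in $X$, the limit satisfies $\psi(s)\in X(s)$, so $\psi\in\Gamma$. Assembling (i)--(iv) completes the proof. I expect the Hausdorff-metric lower semicontinuity argument to be the only delicate step; everything else is routine verification once the selection theorem is in hand.
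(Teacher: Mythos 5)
Your argument is sound for the paracompact case and follows essentially the paper's route (lower semicontinuity extracted from the Hausdorff-metric continuity of $t\mapsto X(t)_1$, then Michael's theorem with boundary data $F=\{t_0\}$), but it has a genuine gap: the hypothesis allows $T$ to be a locally compact Hausdorff space, and such a space need \emph{not} be paracompact (the long line is a standard example). Michael's selection theorem requires paracompactness of the domain, so your global application of it simply does not apply in that case, and nothing in your proposal addresses it. The paper's proof has a second paragraph devoted precisely to this: choose a \emph{compact} neighborhood $W$ of $t_0$ (compact Hausdorff spaces are paracompact), apply Michael's theorem only on $W$ to get a selection $\varphi^{\prime}$ with $\varphi^{\prime}(t_0)=y_0$, take a continuous $f:T\to[0,1]$ with $f(t_0)=1$ and $f\equiv 0$ off $\mathrm{Int}(W)$, and set $\varphi=\|x_0\|f\varphi^{\prime}$ on $W$ and $\varphi=0$ outside $W$. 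The cutoff keeps $\varphi(t)\in X(t)$ because each $X(t)$ is a linear subspace (so closed under scalars and containing $0$), and the vanishing of $f$ off $\mathrm{Int}(W)$ makes the zero-extension continuous. You need this (or an equivalent localization) for the theorem as stated.

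Two smaller remarks. First, you apply Michael's theorem to $\Phi(t)=X(t)$ and prove lower semicontinuity of the full subspaces by rescaling into the unit ball; the paper instead normalizes the target vector ($y_0=x_0/\|x_0\|$), applies Michael to the unit-ball map $t\mapsto X(t)_1$ directly (whose lower semicontinuity is immediate from the Hausdorff-metric hypothesis, with no scaling bookkeeping and no special case at $0$), and rescales the resulting selection by $\|x_0\|$. Both are legitimate since both multivalued maps have nonempty closed convex values, but the paper's normalization is slightly cleaner than your $\lambda$--$\delta$ argument. Second, your verification of the remaining axioms (linearity, continuity of $t\mapsto\|\varphi(t)\|$, local uniform closedness of $\Gamma$) is correct and matches what the paper dismisses as the routine part; note that in axiom (iv) the condition $\psi(s)\in X(s)$ is already part of the hypothesis on $\psi$, so only continuity of $\psi$ needs the limit argument.
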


\begin{proof}

   The only evidence we must provide is that for $t_0\in T$ and $x_0\in X(t_0)$ there exists $\varphi\in \Gamma$ such that $\varphi(t_0) = x_0$.
   Clearly we may suppose $x_0\neq 0$. Set $y_0 := x_0/\|x_0\|$. We claim that $t\to X(t)_1$ is lower semicontinuous as a multivalued map from $T$ to
   $X$. To see this let $U$ be an open subset of $X$, $s\in \set{t\in T\mid U\cap X(t)_1\neq \mset}$, and $z\in U\cap X(s)_1$. There are an open ball
   of $X$ of center $z$ and of radius $\varepsilon > 0$ contained in $U$ and a neighborhood $V$ of $s$ in $T$ such that $d(X(s)_1,X(t)_1) < \varepsilon$ for all
   $t\in V$, $d$ being the Hausdorff metric. Thus for each $t\in V$ there is $w_t\in X(t)_1$ for which $\|z - w_t\| < \varepsilon$. It follows
   that $V\subset \set{t\in T\mid U\cap X(t)\neq \mset}$ and we conclude that $\set {t\in T\mid U\cap X(t)\neq \mset}$ is open. We obtained that
   the map $t\to X(t)_1$ is indeed lower semicontinuous.

   Suppose now that $T$ is paracompact. By Michael's selection theorem mentioned above there exists a continuous map $\varphi^{\pr} :
   T\to X$ such that $\varphi^{\pr}(t)\in X(t)$ for every $t\in T$ and $\varphi^{\pr}(t_0) = y_0$. The map $\varphi$ defined by $\varphi(t) :=
   \|x_0\|\varphi^{\pr}(t)$ suits the requirements.

   Let now $T$ be locally compact Hausdorff. Let $W$ be a compact neighborhood of $t_0$. Again by Michael's selection theorem there is
   a continuous map $\varphi^{\pr} : W\to X$ such that $\varphi^{\pr}(t)\in X(t)$ for every $t\in W$ and $\varphi^{\pr}(t_0) = y_0$. Let
   now $f : T\to [0,1]$ be a continuous function such that $f(t_0) = 1$ and $f(t) = 0$ for $t\notin \mathit{Int}(W)$. The function $\varphi :
   T\to X$ defined by
   \[
      \varphi(t) :=
      \begin{cases}
         \|x_0\|f(t)\varphi^{\pr}(t),  &\text{if $t\in W$},\\
         0,                     &\text{if $t\notin W$}.
      \end{cases}
   \]
   is continuous, satisfies $\varphi(t)\in X(t)$ for $t\in T$ and $\varphi(t_0) = x_0$.

\end{proof}

A continuous field of Banach spaces over a paracompact or a locally compact Hausdorff space $T$ isomorphic to a continuous field of Banach
spaces as described in Proposition~\ref{P:B} will be called uniform. Obviously, a trivial continuous field of Banach spaces is uniform. A
continuous field of Banach spaces over $T$ is called locally uniform if $T$ has an open cover $\set{U_{\alpha}}$ such that its restriction to
each $U_{\alpha}$ is uniform. By using the regularity of the base space as was done in the last paragraph of the previous proof one gets the
following Proposition.

\begin{prop} \label{P:B1}

   Let $T$ be as in Proposition \ref{P:B}, $\{U_{\alpha}\}_{\alpha\in \A}$ an open cover of $T$ and $X_{\alpha}$ a Banach space, $\alpha\in \A$.
   Denote by $M_{\alpha}$ the space of the closed unit balls of all the closed subspaces of $X_{\alpha}$ endowed with the Hausdorff metric.
   Suppose $X(t)$, $t\in T$, is a Banach space that is a closed subspace of $X_{\alpha}$ whenever $t\in U_{\alpha}$. Moreover, suppose that the
   map $t\to X(t)_1$ from $U_{\alpha}$ into $M_{\alpha}$, $\alpha\in \A$, is continuous. Denote by $\Gamma$ the space of all the functions
   $\varphi :T\to \cup_{\alpha\in \A} X_{\alpha}$ such that $\varphi(t)\in X(t)$, $t\in T$, and restriction of $\varphi$ to $U_{\alpha}$ is continuous as a
   map into $X_{\alpha}$, $\alpha\in \A$. Then $((X(t)),\Gamma)$ is a locally uniform continuous field of Banach spaces.

\end{prop}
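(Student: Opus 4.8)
The plan is to check directly that $((X(t)),\Gamma)$ satisfies the axioms of a continuous field of Banach spaces and then to recognize its restriction to each $U_\alpha$ as one of the fields supplied by Proposition~\ref{P:B}. Linearity of $\Gamma$ is clear. For the continuity of $t\mapsto\|\varphi(t)\|$ with $\varphi\in\Gamma$, I would use that $X(t)$ sits isometrically in $X_\alpha$ for $t\in U_\alpha$, so $\|\varphi(t)\|=\|\varphi(t)\|_{X_\alpha}$; since $\varphi$ is continuous into $X_\alpha$ on $U_\alpha$ this norm is continuous on each $U_\alpha$, hence on $T$. The stability axiom is equally routine, as a section that is locally a uniform limit of members of $\Gamma$ is, on each $U_\alpha$, a uniform limit of functions continuous into $X_\alpha$ and therefore continuous into $X_\alpha$ itself.

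The real content is the totality axiom: for $t_0\in T$ and $x_0\in X(t_0)$ one must exhibit $\varphi\in\Gamma$ with $\varphi(t_0)=x_0$, and here I would follow the last paragraph of the proof of Proposition~\ref{P:B}. Fixing $\alpha$ with $t_0\in U_\alpha$ and applying that proposition on $U_\alpha$ yields a selection $\psi$ of $t\mapsto X(t)_1$, continuous into $X_\alpha$, with $\psi(t_0)=x_0/\|x_0\|$. Using the regularity of $T$ I would pick $f\colon T\to[0,1]$ with $f(t_0)=1$ and support a closed subset of $U_\alpha$, and set $\varphi:=\|x_0\|\,f\,\psi$ on $U_\alpha$ and $\varphi:=0$ elsewhere; then $\varphi(t)\in X(t)$, $\varphi(t_0)=x_0$, and $\varphi$ is continuous into $X_\alpha$ on $U_\alpha$.

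The step I expect to be the crux is verifying that this $\varphi$ actually lies in $\Gamma$, that is, that $\varphi\mid_{U_\beta}$ is continuous into $X_\beta$ for every $\beta$. At a point $s\in U_\beta$ with $\varphi(s)=0$ this is immediate from isometry of the embeddings: $\|\varphi(t)\|_{X_\beta}=\|\varphi(t)\|=\|x_0\|f(t)\|\psi(t)\|\le\|x_0\|f(t)\to0$. The delicate situation is a point $s$ in the support of $f$, hence in $U_\alpha\cap U_\beta$, where $\varphi(s)\neq0$; there one must upgrade the $X_\alpha$-continuity of $\psi$ to $X_\beta$-continuity across the overlap. My intended device is that the difference of two vectors lying in the same fibre has a norm independent of the ambient space: choosing, via Proposition~\ref{P:B} applied in $X_\beta$, a selection $\eta$ continuous into $X_\beta$ with $\eta(s)=\psi(s)$, one has $\psi(t)-\eta(t)\in X(t)$ and hence $\|\psi(t)-\eta(t)\|_{X_\beta}=\|\psi(t)-\eta(t)\|_{X_\alpha}$. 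The $M_\alpha$- and $M_\beta$-continuity of $t\mapsto X(t)_1$ allow one to approximate $\psi(s)$ from within $X(t)$ in either ambient norm, and feeding these approximations into the preceding identity should force $\|\psi(t)-\psi(s)\|_{X_\beta}\to0$. This reconciliation of the two ambient norms on the overlaps is the only genuinely nontrivial point.

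Finally I would read off local uniformity. On $U_\alpha$ every member of $\Gamma$ restricts to a selection of $t\mapsto X(t)_1$ continuous into $X_\alpha$, and the construction above shows, conversely, that each such selection is locally the restriction of a member of $\Gamma$; thus the restriction of $((X(t)),\Gamma)$ to $U_\alpha$ is precisely the continuous field attached by Proposition~\ref{P:B} to $X_\alpha$, which is uniform. Consequently $\{U_\alpha\}_{\alpha\in\A}$ exhibits $((X(t)),\Gamma)$ as locally uniform.
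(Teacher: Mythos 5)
You have correctly isolated the crux, and it is worth saying at once that the paper itself offers no proof beyond pointing to the regularity/bump-function argument in the last paragraph of the proof of Proposition \ref{P:B} --- which is exactly your construction $\varphi := \|x_0\|f\psi$ --- so the overlap problem you raise is not addressed there either. But your device for resolving it does not close. Unwind the estimate: with $\eta(s)=\psi(s)$ you get $\|\psi(t)-\psi(s)\|_{X_\beta}\le\|\psi(t)-\eta(t)\|_{X_\beta}+\|\eta(t)-\eta(s)\|_{X_\beta}$, and the first term does equal $\|\psi(t)-\eta(t)\|_{X_\alpha}$ by your same-fibre remark; but to make that small you must bound $\|\psi(t)-\eta(t)\|_{X_\alpha}\le\|\psi(t)-\psi(s)\|_{X_\alpha}+\|\eta(s)-\eta(t)\|_{X_\alpha}$, whose second summand is a cross-fibre increment of a map known to be continuous only into $X_\beta$ --- precisely the quantity you are trying to control, with $\alpha$ and $\beta$ interchanged. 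The same circularity appears if you work with Hausdorff approximants: you can find $w_t,v_t\in X(t)_1$ with $\|w_t-\psi(s)\|_{X_\alpha}$ and $\|v_t-\psi(s)\|_{X_\beta}$ small, but nothing forces $w_t$ and $v_t$ to be close to each other, because Hausdorff convergence of the (circled) unit balls carries no information about relative ``phase'' of the two embeddings.

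In fact the transfer you need is false, and the Proposition read literally fails with it. Take $T=\{0\}\cup\{1/n \mid n\in\mathbb{N}\}$, $U_\alpha=U_\beta=T$, and in $\ell^2$ with basis $\{\delta_k\}_{k\ge 0}$ put $a_n=\cos\theta_n\,\delta_0+\sin\theta_n\,\delta_n$ and $b_n=-\cos\theta_n\,\delta_0+\sin\theta_n\,\delta_n$ with $\theta_n\downarrow 0$. Glue two copies of $\ell^2$ along the lines by identifying $\lambda a_n$ in the first copy with $\lambda b_n$ in the second (the lines meet only at $0$, so this is consistent), and also $\lambda\delta_0$ with $\lambda\delta_0$; call the copies $X_\alpha$, $X_\beta$ and set $X(1/n):=\mathbb{C}a_n\equiv\mathbb{C}b_n$, $X(0):=\mathbb{C}\delta_0$. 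Then $t\to X(t)_1$ is continuous into $M_\alpha$ (since $a_n\to\delta_0$) and into $M_\beta$ (since $b_n\to-\delta_0$ and unit balls are symmetric), yet a selection $\varphi(1/n)=\lambda_n a_n$, $\varphi(0)=\lambda_0\delta_0$, is $X_\alpha$-continuous at $0$ iff $\lambda_n\to\lambda_0$ and $X_\beta$-continuous iff $\lambda_n\to-\lambda_0$; hence every $\varphi\in\Gamma$ has $\varphi(0)=0$ and the field axiom of totality fails. So no argument can repair your crux step without an extra hypothesis: one must assume (or know) that for $t\in U_\alpha\cap U_\beta$ a selection near $t$ is continuous into $X_\alpha$ if and only if it is continuous into $X_\beta$. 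This compatibility is automatic in every application made in the paper, where the local uniform structures all arise from isomorphisms of restrictions of one given continuous field $((A(t)),\Theta)$, so that ``continuous into $X_\alpha$'' means ``locally a section of $\Theta$'', a patch-independent notion; granted it, your bump-function argument goes through, and your isometry remark correctly handles the points where $\varphi$ vanishes. One minor repair as well: an open $U_\alpha$ of a paracompact $T$ need not be paracompact, so Michael's theorem should be applied on a closed neighbourhood $\overline{V}\subset U_\alpha$ (or a compact one in the locally compact case), which is what the paper's appeal to the regularity of the base space is for.
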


Of interest for us are the continuous fields of \st -algebras; of course, for uniform continuous fields of \st -algebras we shall require that
the Banach space appearing in the definition be a \st -algebra and the fibers to be \st -algebras of it. It is natural to ask if a uniform
continuous field of \st -algebras has to be locally trivial. There is some indication in \cite[Theorem 4.3]{C2} that this may be the case when
the fibers are nuclear and separable. On the other hand, \cite[Theorem 3.3]{C1} provides an example of a uniform continuous field of
(non-separable) nuclear \st -algebras that is not locally trivial.

\begin{thm} \label{T:T}

   Suppose $((A(t)),\Theta)$ is a locally uniform continuous field of postliminal \st -algebras over a paracompact or locally compact
   Hausdorff space $T$. Let $B(t)$ be the largest liminal ideal of $A(t)$ and $\Theta^{\pr} := \set{x\in \Theta \mid x(t)\in B(t), t\in T}$.
   Then $((B(t)), \Theta^{\pr})$ is a locally uniform continuous field of \st -algebras over $T$.

\end{thm}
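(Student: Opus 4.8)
The plan is to reduce to the uniform case and then invoke Proposition \ref{P:B}. Since local uniformity is a local property and Proposition \ref{P:B1} lets one pass from the members of an open cover to all of $T$, it suffices to treat a uniform field: a fixed \st -algebra $X$, closed \st -subalgebras $A(t)\subseteq X$, and a continuous map $t\to A(t)_1$ into $\mathcal{M}$. If I can show that $t\to B(t)_1$ is again a continuous map into $\mathcal{M}$, then Proposition \ref{P:B} produces a uniform continuous field $((B(t)),\Gamma)$, where $\Gamma$ is the set of all continuous $\varphi : T\to X$ with $\varphi(t)\in B(t)$. Every such $\varphi$ lies in $\Theta$ (it is a continuous $X$-valued section with values in $A(t)$), so $\Gamma=\set{\varphi\in\Theta\mid \varphi(t)\in B(t)}=\Theta^{\pr}$, which is the asserted conclusion.

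The first ingredient I would establish is a workable description of the fibers, namely
\[
   B(t)=\set{a\in A(t)\mid \pi(a)\in K(H_\pi)\ \text{for every irreducible representation }\pi\text{ of }A(t)}.
\]
The right-hand side is a closed \st -ideal: a norm limit of operators that are compact in every $\pi$ is again such, and the ideal property is immediate. It is liminal, because the restriction of an irreducible $\pi$ to it is either $0$ or irreducible with image an irreducible \st -algebra of compact operators, hence all of $K(H_\pi)$. Finally it contains every liminal ideal, since on any liminal ideal each irreducible representation has compact image. Thus it is the largest liminal ideal $B(t)$.

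With this description in hand the problem becomes the continuity of $t\to B(t)_1$ for the Hausdorff metric, that is, the two semicontinuity statements. For the lower one I would fix $t_0$ and $b_0\in B(t_0)$; after reducing to $b_0\ge 0$ and, by functional calculus, to $b_0=(a_0-\eps)_+$ with $a_0\in B(t_0)$, $a_0\ge 0$, I would extend $a_0$ to a positive section $y\in\Theta$ with $y(t_0)=a_0$ (possible since $((A(t)),\Theta)$ is a continuous field) and set $x(t):=(y(t)-\eps)_+$, a continuous section with $x(t_0)=b_0$. Here $\pi(x(t))=(\pi(y(t))-\eps)_+$ is finite rank exactly when $\pi(y(t))$ has finite rank above $\eps$, which holds at $t_0$ for every $\pi$ because $\pi(a_0)$ is compact; the task is to prove that this persists for $t$ near $t_0$, giving $x(t)\in B(t)$ and hence $\mathrm{dist}(b_0,B(t)_1)\to 0$. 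Upper semicontinuity is the dual assertion that a norm limit $b=\lim b_n$ with $b_n\in B(t_n)_1$ and $t_n\to t_0$ must lie in $B(t_0)$.

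I expect the \emph{main obstacle} to be precisely this persistence, which amounts to controlling the irreducible representations of the neighbouring fibers $A(t)$ -- equivalently their essential norms -- in terms of those of $A(t_0)$, so as to forbid the emergence of non-compact mass near $t_0$. This is where the uniform hypothesis, a common $X$ with Hausdorff-continuous unit balls, should be indispensable, and it is exactly the feature that the non-tame example of the Introduction lacks: there the passage from $A(n)$ to $A(\infty)$ is not continuous in $\mathcal{M}$, and the finite-rank-above-$\eps$ property is lost. I anticipate that uniformity is used to transport this spectral information across fibers and thereby close both semicontinuity arguments.
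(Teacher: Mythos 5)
Your reduction to the uniform case, the identification of $\Theta^{\pr}$ with the selections produced by Propositions \ref{P:B} and \ref{P:B1}, and your characterization of $B(t)$ as the elements sent to compact operators by every irreducible representation are all sound, and they match the paper's framing. But there is a genuine gap exactly where you flag it: you never establish the Hausdorff-metric continuity of $t\to B(t)_1$, and that is the entire content of the theorem once the propositions are in place. Your proposed route --- extend $a_0$ to a positive section $y$, set $x(t) := (y(t)-\eps)_+$, and show $x(t)\in B(t)$ for $t$ near $t_0$ --- cannot be closed by spectral bookkeeping alone: an irreducible representation $\pi$ lives on a single fiber, so $\pi(y(t))$ is not even defined for $t\neq t_0$, and relating $\widehat{A(t)}$ to $\widehat{A(t_0)}$ is precisely what is at stake. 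Nothing in your sketch actually brings the hypothesis $d(A(t_0)_1,A(t)_1)\to 0$ to bear, and, as you yourself note via the introduction's example, the persistence you need is simply false for general continuous fields. So some rigidity theorem converting norm-closeness of unit balls into closeness of ideal structure is unavoidable, and you neither prove nor cite one; ``I anticipate that uniformity is used to transport this spectral information'' is the statement of the problem, not a proof.

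The paper's proof consists of exactly such a rigidity argument, and it is short: if $t^{\pr},t^{\pr\pr}$ lie in a uniform patch and $d(A(t^{\pr})_1,A(t^{\pr\pr})_1) < s \leq 1/21$, then the perturbation theory of Phillips--Raeburn \cite[Lemma 1.10]{PR} puts the ideals of the two algebras in correspondence with corresponding unit balls within Hausdorff distance $7s$, and \cite[Theorem 2.7]{P} guarantees that this correspondence matches the greatest liminal ideals, giving $d(B(t^{\pr})_1,B(t^{\pr\pr})_1) < 7s$. This single quantitative estimate delivers both of your semicontinuities at once, after which Proposition \ref{P:B1} finishes the argument just as you envisioned. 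The missing idea in your proposal is therefore the Kadison--Kastler-type perturbation machinery: without it, the ``persistence'' step you isolate is not a technical verification to be filled in later but the whole theorem.
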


\begin{proof}

   Let $U$ be an open subset of $T$ over which $((A(t)),\Theta)$ is
   uniform.
   There is no loss of generality if we suppose that over $U$ all the fibers $A(t)$ are \st -subalgebras of a certain \st -algebra and $t\to
   A(t) _1$ is continuous for the Hausdorff metric. If $t^{\pr},t^{\pr \pr}\in U$ satisfy $d(A(t^{\pr})_1,A(t^{\pr \pr})_1) < s (\leq 1/21)$ then
   it follows from \cite[Lemma 1.10]{PR} and \cite[Theorem 2.7]{P} that $d(B(t^{\pr})_1,B(t^{\pr \pr})_1) < 7s$. Hence $t\to B(t)_1$ is
   continuous on $U$ and the conclusion follows from Proposition \ref{P:B1}.

\end{proof}

We shall discuss now the behaviour of locally uniform continuous fields of postliminal $C^*$-algebras with respect to two kinds of ideals that
give rise to canonical composition series. Recall that one says that a point $\pi$ in the spectrum $\hat{A}$ of a \st-algebra $A$ satisfies the
Fell condition if there exist a neighbourhood $V$ of $\pi$ in $\hat{A}$ and $a\in A^+$ such that $\varrho(a)$ is a projection of rank 1 for
every $\varrho\in V$. A Fell \st-algebra is a \st-algebra for which all the points in its spectrum satisfy the Fell condition, see \cite{AS} and
\cite[6.1]{Pe} where these algebras were called of Type $I_0$. Every non trivial postliminal \st-algebra has a non trivial largest Fell ideal by
\cite [Proposition 6.1.7]{Pe}. A \st-algebra $A$ is called uniformly liminal if its ideal of all the elements $a\in A$ for which the function
$\pi\to \pi(a)$ is bounded on $\hat{A}$ is dense in $A$, see \cite[p. 443]{ASS} and the references given there. Every non trivial postliminal
\st-algebra has a non trivial largest uniformly liminal ideal by \cite[Theorem 2.6 and Theorem 2.8]{ASS}.

\begin{thm}

   Let $((A(t)),\Theta)$ be a locally uniform continuous field of postliminal \st -algebras over a paracompact or locally compact Hausdorff space
   $T$. Let $B(t)$ be the largest Fell ideal of $A(t)$ and $\Theta^{\pr} := \{x\in \Theta \mid x(t)\in B(t), t\in T\}$. Then
   $((B(t)),\Theta^{\pr})$ is a locally uniform continuous field of \st -algebras.

\end{thm}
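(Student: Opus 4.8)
The plan is to follow the architecture of the proof of Theorem~\ref{T:T} verbatim, with the largest Fell ideal playing the role of the largest liminal ideal. First I would pass to a local situation: let $U$ be an open subset of $T$ over which $((A(t)),\Theta)$ is uniform, and, as there, assume without loss of generality that over $U$ all fibers $A(t)$ are \st-subalgebras of one fixed \st-algebra with $t\to A(t)_1$ continuous for the Hausdorff metric $d$. Everything then reduces to proving that $t\to B(t)_1$ is continuous on $U$, where $B(t)$ is now the largest Fell ideal of $A(t)$; granting this, Proposition~\ref{P:B1} yields the conclusion exactly as in Theorem~\ref{T:T}.

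Consequently the whole weight of the argument falls on a single perturbation estimate: I would look for constants $c>0$ and $s_0>0$ such that $d(A(t')_1,A(t'')_1)<s\le s_0$ forces $d(B(t')_1,B(t'')_1)<cs$. This is the Fell-ideal counterpart of the liminal estimate that Theorem~\ref{T:T} imports from \cite[Lemma~1.10]{PR} and \cite[Theorem~2.7]{P}, so my first move would be to test whether those same results, or the techniques behind them, already account for the largest Fell ideal. If they do not apply directly, I would build the estimate from the defining feature of the Fell condition. The largest Fell ideal $B(t)$ is the ideal whose spectrum is the open set of points of $\widehat{A(t)}$ satisfying the Fell condition, and at each such point there is a positive element whose image is a rank-one projection on a whole neighbourhood. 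Rank-one projections are rigid: an element of a neighbouring fiber $A(t'')$ that is close (in the ambient \st-algebra) to such a witness is, after spectral functional calculus, close to an honest projection, and for $s$ small that projection must again have rank one. This should let me transport the Fell locus, and with it the ideal $B(t)$, from one fiber to a nearby one in a quantitatively controlled fashion.

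The step I expect to be the main obstacle is upgrading this pointwise rigidity into control of the entire unit ball $B(t)_1$ in the Hausdorff metric --- that is, passing from ``each Fell witness perturbs to a Fell witness'' to ``$d(B(t')_1,B(t'')_1)$ is small''. This is precisely the global bookkeeping that \cite{PR} and \cite{P} carry out in the liminal case, and I would try to mimic it, using that by \cite[Proposition~6.1.7]{Pe} the largest Fell ideal is again a canonical ideal cut out by a distinguished open part of the spectrum, so the spectral correspondence induced by a small perturbation should respect it. Two technical points need care: the threshold $s_0$ must be taken small enough (compare the bound $1/21$ in Theorem~\ref{T:T}) that no perturbed witness drops or gains rank, so that no Fell point is spuriously created or destroyed; and one must check that the estimate is genuinely two-sided, controlling $B(t'')_1$ from $B(t')_1$ and conversely, since the Hausdorff metric is symmetric while the transport of witnesses is naturally one-directional. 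Once the estimate is secured, continuity of $t\to B(t)_1$ on each uniform $U$ is immediate and Proposition~\ref{P:B1} closes the argument.
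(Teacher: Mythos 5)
Your reduction is exactly the paper's: localize to a uniform $U$, reduce everything to continuity of $t\to B(t)_1$ in the Hausdorff metric, and close with Proposition~\ref{P:B1}. But the proposal stalls at the one step that carries all the content. You correctly identify the needed estimate $d(A(t_1)_1,A(t_2)_1)<s \Rightarrow d(B(t_1)_1,B(t_2)_1)<cs$, and you propose to obtain it by quantitatively transporting Fell witnesses and then doing ``global bookkeeping'' on the whole unit ball, which you yourself flag as the main obstacle. That direct transport of $B(t)_1$ is not carried out in \cite{PR} or \cite{P} for Fell ideals and is not what the paper does; left as stated, this is a genuine gap, since nothing in your outline converts the pointwise rigidity of rank-one projections into a Hausdorff-metric bound on unit balls.

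The paper dissolves the obstacle rather than attacking it. For $d(A(t_1)_1,A(t_2)_1)<s<1/147$, \cite[Lemma~1.10]{PR} already hands you, \emph{with the metric estimate built in}, an ideal $J$ of $A(t_2)$ with $d(B(t_1)_1,J_1)<7s$ together with a homeomorphism $h:\hat{J}\to \widehat{B(t_1)}$. So the only Fell-specific work is \emph{qualitative}: given $\pi_0\in\hat{J}$, take a Fell witness $a\in B(t_1)^+_1$ on a neighbourhood $V$ of $h(\pi_0)$, and the proof of \cite[Lemma~2.4]{PR} produces $b$ with $\pi(b)$ a rank-one projection for $\pi\in h^{-1}(V)$; hence $J$ is a Fell ideal, and maximality gives $J\subset B(t_2)$ --- no quantitative control of $J_1$ versus $B(t_2)_1$ is ever needed. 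Your two-sidedness worry is then settled by running the correspondence in the other direction and sandwiching: the ideal $L$ of $A(t_1)$ with $d(L_1,B(t_2)_1)<7s$ is Fell by the same argument, contains $B(t_1)$ by the order behaviour of the correspondence in \cite[Lemma~1.10]{PR} (since $J\subset B(t_2)$), and is contained in $B(t_1)$ by maximality of the largest Fell ideal; thus $L=B(t_1)$ and $d(B(t_1)_1,B(t_2)_1)<7s$ outright. In short, the missing idea is to exploit maximality of $B(t)$ to replace your hard quantitative globalization by a soft inclusion argument; note also that the threshold here is $s<1/147$, not the $1/21$ of Theorem~\ref{T:T}, so you cannot simply reuse that bound.
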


\begin{proof}

   Let $U$ be an open subset of $T$ such that the restriction of $((A(t)),\Theta)$ to $U$ is uniform; to simplify matters we shall suppose
   that all the \st -algebras $A(t)$, \\$t\in U$ are \st subalgebras of a certain \st algebra and the map $t\to A(t)_1$ is continuous on $U$ for the
   Hausdorff metric. Let now $t_1,t_2\in U$ satisfy $d(A(t_1)_1,A(t_2)_1) < s (<1/147)$. By \cite[Lemma 1.10]{PR} there is an ideal $J$ of
   $A(t_2)$ such that $d(B(t_1)_1,J_1) < 7s$. The same lemma yields a homeomorphism h from $\hat{J}$ onto $\widehat{B(t_1)}$. Pick $\pi_0\in
   \hat{J}$ and set $\varrho_0 = h(\pi_0)\in \widehat{B(t_1)}$. There is $a\in B(t_1)^+_1$ and a neighbourhood $V$ of $\varrho_0$ in $\widehat{B(t_1)}$ such that
   $\varrho(a)$ is a rank 1 projection for $\varrho\in V$. The proof of \cite[Lemma 2.4]{PR} yields an element $b\in B(t_2)^+_1$ such that
   $\pi(b)$ is a rank 1 projection when $\pi\in h^{-1}(V)$. It follows that $J$ is a Fell ideal contained in $B(t_2)$. Similarly, there is a
   Fell  ideal $L$ of $A(t_1)$ such that $d(L_1,B(t_2)_1) < 7s$ which has to satisfy $B(t_1)\subset L$ by \cite[Lemma 1.10]{PR}. Thus $L =
   B(t_1)$ and $d(B(t_1)_1B(t_2)_1) < 7s$. We found that $t\to B(t)_1$ is continuous on $U$ and Proposition \ref{P:B1} provides the conclusion.

\end{proof}

Very likely the hypothesis of separability in the next result is unnecessary but we were not able to find a proof that dispenses with it.

\begin{thm}

   Let $((A(t)),\Theta)$ be a locally uniform continuous field of separable postliminal \st -algebras over a paracompact or locally compact
   Hausdorff space $T$. Let $B(t)$ be the largest uniformly liminal ideal of $A(t)$ and $\Theta^{\pr} := \{x\in \Theta \mid x(t)\in B(t),
   t\in T\}$. Then $((B(t)),\Theta^{\pr})$ is a locally uniform continuous field of \st -algebras.

\end{thm}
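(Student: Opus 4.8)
The plan is to follow the pattern of the two preceding proofs. First I would localize: pick an open set $U\subset T$ over which the field is uniform, and assume without loss of generality that all the fibers $A(t)$, $t\in U$, are \st -subalgebras of a fixed \st -algebra and that $t\to A(t)_1$ is continuous on $U$ for the Hausdorff metric $d$. Everything then reduces to the local estimate: there is an absolute constant $c>0$ such that whenever $t_1,t_2\in U$ satisfy $d(A(t_1)_1,A(t_2)_1)<s$ with $s<c$, one has $d(B(t_1)_1,B(t_2)_1)<7s$. Granting this, $t\to B(t)_1$ is continuous on $U$ and Proposition \ref{P:B1} yields the conclusion.

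To obtain the estimate I would apply \cite[Lemma 1.10]{PR} to the largest uniformly liminal ideal $B(t_1)$ of $A(t_1)$, producing an ideal $J$ of $A(t_2)$ with $d(B(t_1)_1,J_1)<7s$ together with a homeomorphism $h:\hat{J}\to \widehat{B(t_1)}$. The crucial point is to verify that $J$ is itself uniformly liminal. Since $B(t_1)$ is uniformly liminal, the elements $a$ for which $\pi\to \pi(a)$ is bounded on $\widehat{B(t_1)}$ are dense; I would transport a suitable family of such witnessing elements, countable by the separability hypothesis, through the near-isometry underlying the ideal correspondence and the homeomorphism $h$, in the spirit of \cite[Lemma 2.4]{PR}, so as to exhibit a dense set of bounded elements in $J$. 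Once $J$ is known to be uniformly liminal, maximality of $B(t_2)$ forces $J\subset B(t_2)$.

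By the symmetric application of \cite[Lemma 1.10]{PR} I would obtain a uniformly liminal ideal $L$ of $A(t_1)$ with $d(L_1,B(t_2)_1)<7s$ and $B(t_1)\subset L$; maximality then gives $L=B(t_1)$, whence $d(B(t_1)_1,B(t_2)_1)<7s$, completing the estimate and thereby the argument on $U$.

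I expect the main obstacle to be the transport step, that is, showing that $J$ is uniformly liminal. In the Fell case the invariant being transported was the existence of an element whose image under a representation is a rank one projection near a fixed point of the spectrum, a local and rigid condition handled cleanly by \cite[Lemma 2.4]{PR}. The uniformly liminal condition, being a density together with a global boundedness statement over the whole spectrum, does not transport pointwise in such an immediate way; this is presumably exactly where separability enters, allowing one to replace boundedness over all of $\widehat{B(t_1)}$ by control over a countable dense family and to invoke the characterizations of the largest uniformly liminal ideal in \cite[Theorem 2.6 and Theorem 2.8]{ASS}.
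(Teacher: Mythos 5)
Your plan mirrors the paper's Fell-ideal proof, but the step you yourself flag as the main obstacle --- showing that the ideal $J$ of $A(t_2)$ produced by \cite[Lemma 1.10]{PR} is uniformly liminal --- is a genuine gap, not a deferred routine verification, and it is exactly the point where this strategy breaks down. The Fell condition transported across the ideal correspondence because it is witnessed by a single positive element whose images near a fixed point of the spectrum are rank-one projections, and projections together with their ranks are rigid under small norm perturbations; that rigidity is what the proof of \cite[Lemma 2.4]{PR} exploits. Uniform liminality has no such rigid witness: it is a density statement about elements $a$ whose images $\pi(a)$ satisfy a uniform boundedness condition over the whole spectrum, and this condition is destroyed by arbitrarily small norm perturbations, so knowing that each $b\in J_1$ lies within $7s$ of some $a\in B(t_1)_1$ with uniformly bounded images tells you nothing about the images of $b$ under representations of $J$; moreover the homeomorphism $h:\hat{J}\to \widehat{B(t_1)}$ carries no quantitative information linking $\pi(b)$ to $h(\pi)(a)$. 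Your guess that separability enters by reducing to a countable dense family of witnesses does not meet this difficulty, since the obstruction is non-rigidity, not cardinality. Tellingly, the author remarks just before the theorem that a proof dispensing with separability could not be found --- i.e., a direct \cite{PR}-style transport argument of the kind you sketch is precisely what is missing.

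The paper's actual proof sidesteps transport entirely. After the same localization to an open set $U$ where the field is uniform, it notes that the fibers, being separable and postliminal, are separable and nuclear, so \cite[Theorem 4.3]{C2} applies: if $d(A(t_1)_1,A(t_2)_1) < s < 1/420000$ then $A(t_1)$ and $A(t_2)$ are isomorphic, and any isomorphism carries $B(t_1)$ onto $B(t_2)$ because the largest uniformly liminal ideal is canonical. The same theorem then yields the quantitative estimate $d(B(t_1)_1,B(t_2)_1) < 28s^{1/2}$ --- a worse modulus than the $7s$ you predicted, but amply sufficient for continuity of $t\to B(t)_1$ on $U$, after which Proposition \ref{P:B1} concludes as in your sketch. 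Thus separability enters as a hypothesis of the perturbation theorem of \cite{C2}, not through any countable-family argument; your localization and final appeal to Proposition \ref{P:B1} agree with the paper, but without a substitute for the unproved transport step your proposal is incomplete.
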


\begin{proof}

   As in the previous proof, we consider an open subset $U$ of $T$ such that the given continuous field of \st -algebras restricted to $U$ is
   uniform. We shall also suppose that the family of \st -algebras $\{A(t) \mid t\in U\}$ is contained in a certain \st -algebra and the map
   $t\to A(t)_1$ is continuous on $U$ for the Hausdorff metric. If $t_1,t_2\in U$ satisfy $d(A(t_1)_1,A(t_2)_1) < s < 1/420000$ then $A(t_1)$
   and $A(t_2)$ are isomorphic by \cite[Theorem 4.3]{C2}. Of course, every isomorphism between these two \st -algebras maps $B(t_1)$ onto
   $B(t_2)$. Hence, the same result of \cite{C2} tells us that $d(B(t_1),B(t_2)) < 28s^{1/2}$ so the map $t\to B(t)_1$ is continuous on $U$.
   Once more Proposition \ref{P:B1} yields the conclusion.

\end{proof}

\begin{qu}

   Let $((A(t)),\Theta)$ be a uniform continuous field of postliminal \st -algebras over $T$. Can one choose a non-trivial continuous trace ideal
   $B(t)$, $t\in T$,
   of $A(t)$ such that with $\Theta^{\pr} := \set{x\in \Theta \mid x(t)\in B(t), t\in T}$, $((B(t)),\Theta^{\pr})$ is a continuous field of \st
   -algebras?

\end{qu}

Given a continuous field $((A(t)),\Theta)$ of postliminal \st -algebras over a locally compact Hausdorff space $T$, let $A$ be the \st -algebra
defined by this continuous field; obviously it is a postliminal \st -algebra. Let $B$ be its greatest liminal ideal. If the image of $B$ in
$A(t)$ by the evaluation map is the greatest liminal ideal of $A(t)$, $t\in T$, then it is easily seen that the given field is tame. Conversely,
suppose that $((A(t)),\Theta)$ is tame and let $C$ be the \st -algebra defined by the continuous field of the greatest liminal ideals. Then $C =
B$ the greatest liminal ideal of $A$. Indeed, it is clear that $C$ is a liminal ideal of $A$ so $C\subset B$. Let now $x\in B$. With $t\in T$,
$\rho\in \widehat{A(t)}$, we have that $y\to \rho(y(t))$, $y\in A$, is an irreducible representation of $A$ hence $\rho(x(t))$ is a compact
operator over the space of the representation. We conclude by \cite[4.2.6]{D} that $x(t)\in C(t)$. Thus $B\subset C$.

Very likely the following lemma is known but in the absence of a reference for it we give its simple proof.

\begin{lem} \label{L:min}

   Let $A$ be a postliminal \st -algebra, $M\subset \Pm(A)$ the set of all the minimal primitive ideals. Then $\Int(M)$ is the primitive ideal
   space of the greatest liminal ideal $I$ of $A$.

\end{lem}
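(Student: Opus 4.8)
The plan is to use the standard order-isomorphism between the closed two-sided ideals of $A$ and the open subsets of $\Pm(A)$: to a closed ideal $J$ corresponds the open set $\set{P\in\Pm(A)\mid J\not\subseteq P}$, and the map $P\mapsto P\cap J$ is a homeomorphism of this set onto $\Pm(J)$ that respects inclusions in both directions (\cite[3.2.1]{D} and the remarks following it). Under this identification I shall regard $\Pm(I)$ as the open set $\set{P\in\Pm(A)\mid I\not\subseteq P}$ and prove the two inclusions $\Pm(I)\subseteq\Int(M)$ and $\Int(M)\subseteq\Pm(I)$ separately. Throughout I use that an ideal or a quotient of a postliminal \st-algebra is again postliminal, and that a liminal \st-algebra has a $T_1$ spectrum, so that its primitive ideals are pairwise incomparable.

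For the first inclusion I would show that every $P$ with $I\not\subseteq P$ is minimal in $\Pm(A)$. Suppose $P^{\pr}\subsetneq P$ with $P^{\pr}\in\Pm(A)$. If $I\subseteq P^{\pr}$ then $I\subseteq P$, contrary to the choice of $P$; hence $I\not\subseteq P^{\pr}$, so both $P^{\pr}$ and $P$ lie in $\set{Q\mid I\not\subseteq Q}\cong\Pm(I)$. Since the correspondence is a bijection, $P^{\pr}\subsetneq P$ yields the strict inclusion $P^{\pr}\cap I\subsetneq P\cap I$ of two distinct primitive ideals of $I$; as $I$ is liminal its spectrum is $T_1$ and no such comparable pair exists, a contradiction. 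Thus $\set{P\mid I\not\subseteq P}\subseteq M$, and being open it is contained in $\Int(M)$.

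For the reverse inclusion I would take $P\in\Int(M)$ and choose an ideal $J$ with $P\in\set{Q\mid J\not\subseteq Q}\subseteq M$. The goal is to prove that $J$ is liminal, for then $J\subseteq I$ (as $I$ is the greatest liminal ideal), whence $J\not\subseteq P$ forces $I\not\subseteq P$ and $P\in\Pm(I)$. Since every point of $\Pm(J)\cong\set{Q\mid J\not\subseteq Q}$ is minimal in $\Pm(A)$, any strict inclusion in $\Pm(J)$ would, via the order-isomorphism, produce a strict inclusion $Q_1\subsetneq Q_2$ in $\Pm(A)$ with $Q_2$ minimal, which is impossible; hence $\Pm(J)$ admits no proper inclusions, i.e. $\Pm(J)$ is $T_1$. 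It then remains to deduce that the postliminal algebra $J$ with $T_1$ spectrum is liminal: for each $\rho\in\hat J$ the kernel $\ker\rho$ is a maximal element of $\Pm(J)$, so $J/\ker\rho$ is a simple postliminal \st-algebra and hence isomorphic to $K(H_\rho)$; consequently $\rho(J)=K(H_\rho)$ for every $\rho$, which is exactly liminality.

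The step I expect to be the main obstacle is this last one, converting the $T_1$ condition into liminality in the postliminal setting; the crux is that a simple postliminal \st-algebra must be elementary, i.e. isomorphic to the compact operators on some Hilbert space, which is what turns ``closed point of the spectrum'' into ``the image of the representation is exactly the compact operators.'' Everything else is bookkeeping with the ideal/open-set dictionary and the incomparability of primitive ideals in a $T_1$ spectrum.
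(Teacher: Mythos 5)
Your proposal is correct and follows essentially the same route as the paper: for the substantive direction you identify $\Int(M)$ with $\Pm(J)$ for a closed ideal $J$, deduce that $\Pm(J)$ is $T_1$ from the minimality of its points, and conclude that $J$ is liminal, hence $J\subseteq I$ and $\Int(M)\subseteq \Pm(I)$. The only differences are cosmetic: you prove the inclusion $\Pm(I)\subseteq \Int(M)$ directly instead of citing Dixmier's Remarque C as the paper does, and you spell out the standard fact that a postliminal \st -algebra with $T_1$ spectrum is liminal (via simple quotients being elementary), which the paper leaves implicit.
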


\begin{proof}

   Dixmier remarked in \cite[Remarque C, p. 111]{D1} that we have $\Pm(I)\subset \Int(M)$. $\Int(M)$ is the primitive ideal space of an ideal,
   $J $ say, of $A$. If $P\in \Int(M)$ then the relative closure of $\{P\}$ in $\Int(M)$ is just $\{P\}$ since all the ideals in $\Int(M)$ are
   minimal primitive ideals. Thus $\Int(M)$ is $T_1$ in the relative topology and we gather that $J$ is a liminal ideal of $A$. Hence $J\subset
   I$ and $\Int(M)\subset \Pm(I)$

\end{proof}

The set $M$ of above need not be open; for examples see \cite{G} and \cite[Example 4.3]{LT}.

We can now state and prove a necessary and sufficient condition for the tameness of a continuous field of postliminal \st -algebras.

\begin{thm}

   Let $\mathcal{C} := ((A(t)),\Theta)$ be a continuous field of postliminal \st -algebras over the locally compact Hausdorff space $T$ and $A$
   the \st -algebra defined by $\mathcal{C}$. Denote by $W$ the set of non minimal ideals in $\Pm(A)$, $W(t) := W\cap \Pm((A(t))$,
   $I(t) := \cap \{P \mid P\in W(t)\}$ for $t\in T$, and $\Tilde{\Theta} := \{x\in \Theta \mid x(t)\in I(t), t\in T\}$. Then $\mathcal{C}$ is tame if and
   only if $((I(t)),\Tilde{\Theta})$ is a continuous field of \st -algebras.

\end{thm}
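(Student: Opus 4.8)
The plan is to show that for each $t\in T$ the ideal $I(t)$ coincides with the greatest liminal ideal $B(t)$ of $A(t)$; once this is established the asserted equivalence is immediate, because $\wt{\Theta}$ then coincides with the field $\Theta^{\pr} := \{x\in \Theta \mid x(t)\in B(t),\ t\in T\}$ attached to the greatest liminal ideals, and tameness is by definition the statement that $((B(t)),\Theta^{\pr})$ is a continuous field.

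First I would identify $W(t)$ at the level of the fibre. By \cite[Theorem 1.1]{F} the assignment $Q\mapsto \{x\in A \mid x(t)\in Q\}$ identifies $\Pm(A(t))$ with the subset $\{P\in \Pm(A) \mid t_P = t\}$ of $\Pm(A)$, and under this identification $P$ is minimal in $\Pm(A)$ precisely when $Q_P$ is minimal in $\Pm(A(t))$. Hence $W(t)$ corresponds exactly to the set $\Pm(A(t))\setminus M(t)$ of non-minimal primitive ideals of the fibre, where $M(t)$ denotes the set of minimal primitive ideals of $A(t)$, and $I(t) = \cap\{Q \mid Q\in \Pm(A(t))\setminus M(t)\}$ is a closed ideal of $A(t)$.

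Next I would compute the primitive ideal space of $I(t)$ by a hull-kernel argument. Writing $S := \Pm(A(t))\setminus M(t)$, the ideal $I(t)$ is the kernel of $S$, so its hull is the Jacobson closure $\overline{S}$; consequently $\Pm(I(t))$, regarded as the open set of those primitive ideals of $A(t)$ that do not contain $I(t)$, equals $\Pm(A(t))\setminus \overline{S} = \Int(\Pm(A(t))\setminus S) = \Int(M(t))$. On the other hand, Lemma \ref{L:min} applied to the postliminal algebra $A(t)$ gives $\Pm(B(t)) = \Int(M(t))$. Thus $I(t)$ and $B(t)$ are closed ideals of $A(t)$ with the same primitive ideal space, and since the closed ideals of a \st -algebra are in bijection with the open subsets of its primitive ideal space, we conclude $I(t) = B(t)$.

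With $I(t) = B(t)$ for every $t\in T$ we obtain $\wt{\Theta} = \Theta^{\pr}$ and $((I(t)),\wt{\Theta}) = ((B(t)),\Theta^{\pr})$, whence $\mathcal{C}$ is tame if and only if $((I(t)),\wt{\Theta})$ is a continuous field of \st -algebras. I expect the one delicate point to be the identification $I(t) = B(t)$: one must transport the minimality of primitive ideals between $A$ and the fibre correctly via \cite{F}, and must take the closure and interior in the hull-kernel computation inside $\Pm(A(t))$ rather than inside $\Pm(A)$. Everything else is formal.
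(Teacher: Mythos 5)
Your argument is correct, but it takes a genuinely different route from the paper's. You establish the pointwise identity $I(t)=B(t)$: since, by \cite[Theorem 1.1]{F} and the remark in the introduction, $W(t)$ is exactly the set of non-minimal primitive ideals of $A(t)$, you get $\mathrm{hull}(I(t))=\overline{W(t)}$ (closure in $\Pm(A(t))$), hence $\Pm(I(t))=\Pm(A(t))\setminus\overline{W(t)}=\Int(M(t))$, which is $\Pm(B(t))$ by Lemma \ref{L:min}; the bijection between closed ideals and open subsets of the primitive ideal space then forces $I(t)=B(t)$, so $\wt{\Theta}=\Theta^{\pr}$ and the asserted equivalence is literally the definition of tameness. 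The paper never isolates this identity. Instead it argues through the global algebra: by the remarks preceding Lemma \ref{L:min}, $\mathcal{C}$ is tame if and only if the evaluation image in $A(t)$ of the greatest liminal ideal $B$ of $A$ equals $B(t)$ for every $t$, equivalently $\Pm(A(t))\cap\overline{W}=\overline{W(t)}$ for every $t$; then \cite[Theorem 1.2]{F} identifies $\Pm(A(s))\cap\overline{W}$ with $\mathrm{hull}(J(s))$, where $J(s):=\{x(s)\mid x\in\wt{\Theta}\}$, while $\overline{W(t)}=\mathrm{hull}(I(t))$, so tameness becomes $\overline{J(t)}=I(t)$ for all $t$, which is precisely the fullness axiom, the only axiom of a continuous field that $((I(t)),\wt{\Theta})$ could fail. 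Your proof is shorter, entirely fibrewise, and dispenses with \cite[Theorem 1.2]{F} altogether (you use only the transfer of minimality between $\Pm(A)$ and the fibres, which the paper also presupposes); it moreover shows the theorem can be sharpened to the cleaner statement that $I(t)$ \emph{is} the greatest liminal ideal of $A(t)$, which makes transparent why the criterion works. What the paper's route buys are the intermediate global characterizations --- tameness iff $\Pm(A(t))\cap\overline{W}=\overline{W(t)}$ for all $t$, iff the fields in $\wt{\Theta}$ have dense range in each $I(t)$ --- which pinpoint exactly where tameness can break down and keep visible the interplay between the ideal $B$ of the global algebra and the fibre ideals; your version trades that structural information for brevity.
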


\begin{proof}

   By Lemma \ref{L:min} and the remarks preceding it $\mathcal{C}$ is tame if and only if $\Pm(A(t))\cap \overline{W} = \overline{W(t)}$ for every
   $ t\in T$. Let $s\in T$ and set $J(s) := \{x(s) \mid x\in \Theta, x(t)\in I(t)\ \text{for all}\ t\in T\}\subset I(s)$. Then it follows from \cite[Theorem 1.2]{F}
   that $\Pm(A(s))\cap \overline{W} = \mathrm{hull}(J(s))$. Now $\overline{W(t)} = \mathrm{hull}(I(t))$ so $\mathcal{C}$ is tame if and only if
   $\mathrm{hull}(J(t) = \mathrm{hull}(I(t))$ for each $t\in T$ if and only if $\overline{J(t)} = I(t)$ for each $t\in T$. This means that
   $\mathcal{C}$ is tame if and only if $((I(t)),\tilde{\Theta})$ is a continuous field of \st -algebras and we are done.

\end{proof}

\section{An Example}

As mentioned in the Introduction, we are going to construct in this section a continuous field of postliminal \st -algebras over $[0,1]$ whose
fibers are mutually isomorphic and which has the additional property that none of its restrictions to the relatively open subsets of $[0,1]$ is
tame.

First we proceed to prepare two \st -algebras that will serve as building blocks of the fibers. Let $\mathbb{N} = \cup_{p=1}^{\infty} S_p$ where
the sets $\{S_p\}$ are mutually disjoint and each $S_p = \{n_1^p < n_2^p < \ldots \}$ is infinite. Let $H$ be a separable Hilbert space with an
orthonormal basis $\{\xi_k\}_{k=1}^{\infty}$ and $\mathcal{B}(H)$ the \st -algebra of all the bounded operators on $H$. Denote by $e_{ij}^0$ the
partial isometry that maps $\xi_j$ to $\xi_i$ and vanishes on each $\xi_k$ with $k\neq j$. The \st -subalgebra of \BH \ generated by
$\set{e_{ij}^0 \mid i,j = 1,2,\ldots }$ is the ideal of all compact operators and we shall denote it by $A_0$. Put now $e_{ij}^1 :=
\sum_{m=1}^{\infty} e_{n_m^in_m^j}^0$, $i,j = 1,2,\ldots $ where the series converges in the strong operator topology. Then
$\{e_{ii}^1\}_{i=1}^{\infty}$ are mutually orthogonal projections, $\sum_{i=1}^{\infty} e_{ii}^1 = \mathbf{1}_H$, $e_{ij}^1$ is a partial
isometry from $e_{jj}^1(H)$ onto $e_{ii}^1(H)$, $(e_{ij}^1)^* = e_{ji}^1$ and
\[
 e_{ij}^1e_{rs}^1 =
 \begin{cases}
    e_{is}^1,    &j = r\\
    0,           &j\neq r.
 \end{cases}
\]
Hence the \st -subalgebra $K_1$ of \BH generated by $\{e_{ij}^1 \mid i,j = 1,2,\ldots \}$ is isomorphic to $A_0$ and $A_0\cap K_1 = \{0\}$. We
have
 \begin{equation}  \label{E:1}
    e_{ij}^1e_{rs}^0 =
    \begin{cases}
       e_{n_m^is}^0,  &\text{if $r = n_m^j$ for some $m$},\\
       0,             &\text{otherwise}
    \end{cases}
\end{equation}
and
\begin{equation}  \label{E:2}
   e_{rs}^0e_{ij}^1 =
   \begin{cases}
      e_{rn_m^j}^0,  &\text{if $s = n_m^i$ for some $m$},\\
      0,             &\text{otherwise}.
 \end{cases}
\end{equation}

$A_1 := A_0 + K_1$ is a postliminal \st -algebra since $A_0$ and $K_1 \sim A_1/A_0$ are postliminal \st -algebras (actually liminal \st
-algebras in this case). Each $x\in A_1$ admits a unique decomposition $x = x_0 + x_{K_1}$ with $x_0\in A_0$ and $x_{K_1}\in K_1$. The map $x\to
x_{K_1}$ is a homomorphism hence $\|x_{K_1}\|\leq \|x\|$ and $\|x_0\|\leq 2\|x\|$.

From (\ref{E:1}) and (\ref{E:2}) it follows that the sequence $\{\sum_{i=1}^m e_{ii}^1\}_{m=1}^{\infty}$ is an increasing approximate unit for
$A_1$ consisting of projections.

We suppose now that the \st -subalgebras $K_1, \ldots K_{l-1}$ of \BH \ have been defined such that $K_p$, $1\leq p\leq l-1$, is spanned by
\[
 e_{ij}^p := \sum_{m=1}^{\infty} e_{n_m^in_m^j}^{p-1},   i,j = 1,2,\ldots .
\]
It follows that $\{e_{ii}^p\}_{i=1}^{\infty}$ are mutually orthogonal projections, $\sum_{i=1}^{\infty} e_{ii}^p = \mathbf{1}_H$, and $e_{ij}^p$
is a partial isometry from $e_{jj}^p(H)$ onto $e_{ii}^p(H)$. With $A_p := A_{p-1} + K_p$, $1\leq p\leq l-1$, we have $A_{p-1}\cap K_p = \{0\}$,
$A_p$ is a postliminal \st -subalgebra of \BH, $A_{p-1}$ is an ideal of $A_p$ and $\{\sum_{i=1}^m e_{ii}^p\}_{m=1}^{\infty}$ is an increasing
approximate unit of $A_p$ consisting of projections. Every element $x\in A_p$ admits a unique decomposition $x = x_{p-1} + x_{K_p}$ where
$x_{p-1}\in A_{p-1}$, $x_{K_p}\in K_p$. Moreover, $x\to x_{K_p}$ is a homomorphism hence $\|x_{K_p}\|\leq \|x\|$ and $\|x_{p-1}\|\leq 2\|x\|$.

We define now
\[
 e_{ij}^l := \sum_{m=1}^{\infty} e_{n_m^in_m^j}^{l-1},   i,j = 1,2,\ldots .
\]
Then $\{e_{ii}^l\}_{i=1}^{\infty}$ are mutually orthogonal projections, $\sum_{i=1}^{\infty} e_{ii}^l = \mathbf{1}_H$, and $e_{ij}^l$ is a
partial isometry from $e_{jj}^l(H)$ onto $e_{ii}^l(H)$. We obtain $(e_{ij}^l)^* = e_{ji}^l$ and
\begin{equation} \label{E:3}
   e_{ij}^le_{rs}^l =
   \begin{cases}
      e_{is}^l,   &\text{if $j=r$},\\
      0,          &\text{otherwise},
   \end{cases}
\end{equation}
hence the \st -subalgebra $K_l$ of \BH \ generated by $\{e_{ij}^l \mid i,j = 1,2,\ldots \}$ is isomorphic to $A_0$. We also have
\begin{equation}  \label{E:4}
   e_{ij}^le_{rs}^{l-1} =
   \begin{cases}
      e_{n_m^is}^{l-1},   &\text{if $r = n_m^j$ for some $m$},\\
      0,                 &\text{otherwise},
   \end{cases}
\end{equation}
and
\begin{equation}   \label{E:5}
   e_{rs}^{l-1}e_{ij}^l = (e_{ji}^le_{sr}^{l-1})^* =
   \begin{cases}
      e_{rn_m^j}^{l-1},  &\text{if $s = n_m^i$ for some $m$},\\
      0,                 &\text{otherwise}.
   \end{cases}
\end{equation}
Hence, if $x\in K_l$ and $y\in K_{l-1}$ then $xy,yx\in K_{l-1}$. Suppose now that $x\in K_l$, $z\in A_{l-2}$. Then
\[
 xz = \lim_{m\to \infty} x(\sum_{i=1}^m e_{ii}^{l-1})z = \lim_{m\to \infty} (x\sum_{i=1}^m e_{ii}^{l-1})z.
\]
We established that $x\sum_{i=1}^m e_{ii}^{l-1}\in K_{l-1}$ for every $m$ hence $xz\in A_{l-2}$. Similarly, $zx\in A_{l-1}$ and we gather that
$A_{l-1} = A_{l-2} + K_{l-1}$, $k\geq 2$, is an ideal in $A_l := A_{l-1} + K_l$ which is a \st -subalgebra of \BH \ by \cite[1.8.4]{D}. We want
now to prove $A_{l-1}\cap K_l = \{0\}$. Denote by $B_l^m$ the finite dimensional \st -algebra generated by $\{e_{ij}^l \mid 1\leq i,j\leq m\}$.
Then $K_l = \overline{\cup_{m=1}^{\infty} B_l^m}$. From
\[
 \|\sum_{i,j=1}^m \alpha_{ij}e_{ij}^l - \sum_{i,j=1}^m \alpha_{ij}e_{ij}^l\sum_{r=1}^s e_{rr}^{l-1}\| = \|\sum_{i,j=1}^m \alpha_{ij}e_{ij}^l\|
\]
for every $s$ we get $B_l^m\cap A_{l-1} = \{0\}$ for every $m$. Thus, the quotient map $A_l\to A_l/A_{l-1}$ is isometric on each $B_l^m$ hence
it is isometric on $K_l$ and we conclude that $A_{l-1}\cap K_l = \{0\}$. $A_l$ is a postliminal \st -algebra since $A_{l-1}$ and
$A_l/A_{l-1}\sim K_i$ are postliminal \st -algebras.

In this manner we construct inductively an increasing sequence $\{A_l\}_{l=0}^{\infty}$ of postliminal \st -subalgebras of \BH \ such that
$A_{l-1}$ is an ideal in $A_l$. It follows that $A := \overline{\cup_{l=0}^{\infty} A_l}$ is a postliminal \st -subalgebra of \BH \ whose
greatest liminal ideal is $A_0$.

Set now $C_1 := K_1$, $C_2 := C_1 + K_2$. $C_1$ is an ideal in $\overline{C_2}$ hence $C_2$ is closed by \cite[1.8.4]{D}. We have $C_1\cap K_2 =
\{0\}$ and $A_0\cap C_2 = \{0\}$ since $(A_0 + K_1)\cap K_2 = \{0\}$ and $A_0\cap K_1 = \{0\}$. We define inductively $C_l : C_{l-1} + K_l$.
Then $C_{l-1}$ is an ideal of the \st -algebra $C_l$, $C_{l-1}\cap K_l = \{0\}$ and $A_0\cap C_l = \{0\}$. From (\ref{E:3}), (\ref{E:4}) and
(\ref{E:5}) we find that $e_{ij}^l\to e_{ij}^{l-1}$, $1\leq l\leq p$, $i,j\geq 1$ yields an isomorphism $\varphi_p$ of $C_p$ onto $A_{p-1}$.
Obviously $\varphi_{p+1}$ extends $\varphi_p$ hence one gets an isomorphism $\varphi$ from $C := \overline{\cup_{p=1}^{\infty} C_p}$ onto $A$
that extends each $\varphi_p$. Let now $x\in A$, $x = \lim_{p\to \infty} x_p$ with $x_p\in A_p$, $p\geq 1$. Then $x_p = x_p^0 + x_p^p$ where
$x_p^0\in A_0$ and $x_p^p\in C_p$. From $\|x_p^0 - x_q^o\|\leq 2\|x_p - x_q\|$ we conclude that the Cauchy sequence $\{x_p^0\}_{p=1}^{\infty}$
converges to some $x^0\in A_0$ hence $\{x_p^p\}_{p=1}^{\infty}$ converges to some $x^C\in C$ that satisfies $x = x^0 + x^C$. Now $x_p\to x_p^p$
is a homomorphism for each $p$ therefore $x\to x^C$ is a homomorphism. We have $\|x^C\|\leq \|x\|$ and $\|x^0\|\leq 2\|x\|$. The quotient map
$A\to A/A_0$ is isometric on each $C_p$ hence it is isometric on $C$. It follows that $A_0\cap C = \{0\}$ and the decomposition $x = x^0 + x^C$
is unique.

Now we can begin constructing the continuous field of \st -algebras we need. Let $\{r_n\}$ be an enumeration of the set of rational numbers in
$[0,1]$. For an irrational number $t\in [0,1]$ we define $A(t) := c_0(A)$ that is, the direct sum of $A$ with itself $\aleph_0$ times. $A(r_n)$
is a \st -subalgebra of $c_0(A)$ that is also a direct sum of copies of $A$ except that at the $n$-th spot we insert $C$ instead of $A$.
Obviously all the fibers are mutually isomorphic postliminal \st -algebras. The ${}^*$-algebra $\Gamma$ of the continuous vector fields consists
of all the continuous functions $x : [0,1]\to c_0(A)$ such that $x(t)\in A(t)$ for every $t\in [0,1]$.

To show that $((A(t))_{t\in [0,1]},\Gamma)$ so defined is a continuous field we must check that $\{x(t) \mid x\in \Gamma\} = A(t)$ for $t\in
[0,1]$. To this end let $t_0\in [0,1]$ and $\{a_n\}\in A(t_0)$. For $n\in \mathbb{N}$ let $f_n : [0,1]\to [0,1]$ be a continuous function such
that $f_n(t_0) = 1$ and $f_n(r_n) = 0$ if $r_n\neq t_0$. Define $x(t) := \{f_n(t)a_n\}$, $t\in [0,1]$. Then $x$ is a continuous function from
$[0,1]$ to $c_0(A)$ such that $x(t)\in A(t)$ for $t\in [0,1]$ i.e. $x\in \Gamma$. Moreover $x(t_0) = \{a_n\}$ and we have proved that we
constructed a continuous field of \st -algebras.

The greatest liminal ideal $B(t)$ of $A(t)$ is $c_0(A_0)$ when $t$ is irrational. The greatest liminal ideal $B(r_n)$ of $A(r_n)$, $n\in
\mathbb{N}$, is again a direct sum whose components are all equal to $A_0$ except the one at the $n$-th place that it is equal to $K_1$. Thus if
$x\in \Gamma$ satisfies $x(t)\in B(t)$ for every $t\in [0,1]$ then the $n$-th component of $x(r_n)$ must vanish since $A_0\cap C =\{0\}$. It
follows that for the restriction of our continuous field of \st -algebras to any relatively open subset $U$ of $[0,1]$ the family $(B(t))_{t\in
U}$ together with $\{x\in \Gamma \mid x(t)\in B(t), t\in U\}$ does not form a continuous field of \st -algebras.
\bibliographystyle{amsplain}
\bibliography{}

\begin{thebibliography}{99}

 \bibitem{AS}

  R. J. Archbold and D. W. B. Somerset, \emph{Transition probabilities and trace functions for \st-algebras}, Math. Scand. \textbf{73} (1993),
  81--111.

 \bibitem{ASS}

  R. J. Archbold, D. W. B. Somerset and J. S. Spielberg, \emph{Upper multiplicity and bounded trace ideals in \st-algebras}, J. Funct. Anal.
  \textbf{146} (1997), 430--463.

 \bibitem{C1}

  E. Christensen and M.-D. Choi, \emph{Completely order isomorphic and close \st -algebras need not be *-isomorphic}, Bull. London Math. Soc.
  \textbf{15} (1983), 604--610.

 \bibitem{C2}

  E. Christensen, A. M. Sinclair, R. R. Smith, S. A. White and W. Winter, \emph{Perturbations of nuclear \st -algebras}, Acta Math. \textbf{208} (2012), 93--150.

 \bibitem{D1}

  J. Dixmier, \emph{Sur les \st -alg\`{e}bres}, Bull. Soc. Math. France \textbf{88} (1960), 95--112.

 \bibitem{D}

  J. Dixmier, \emph{\st -algebras}, North-Holland, Amsterdam, 1977.

 \bibitem{F}

  J. M. G. Fell, \emph{The structure of algebras of operator fields}, Acta Math. \textbf{106} (1961), 233--280.

 \bibitem{G}

  P. Green, \emph{ Minimal primitive ideals of GCR \st -algebras}, Proc. Amer. Math. Soc. \textbf{73} (1979), 209--210.

 \bibitem{LT}

  A. J. Lazar and D. C. Taylor, \emph{Approximately finite dimensional \st -algebras and Bratteli diagrams}, Trans. Amer. Math. Soc.
  \textbf{259} (1980), 599--619.

 \bibitem{M}

  E. Michael, \emph{Continuous selections. I}, Ann. of Math. \textbf{63} (1956), 361--382.

 \bibitem{Pe}

  G. K. Pedersen, \emph{\st-algebras and their automorphism groups}, Academic Press, London, 1979.

 \bibitem{P}

  J. Phillips, \emph{Perturbations of \st -algebras}, Indiana Univ. Math. J. \textbf{23} (1974), 1167--1176.

 \bibitem{PR}

  J. Phillips and I. Raeburn, \emph{Perturbations of \st-algebras, II}, Proc. London Math. Soc. \textbf{43} (1981), 46--72.

\end{thebibliography}

\end{document}